\newtheorem{lemma}{Lemma}
\theoremstyle{definition}
\title{A note on semi-infinite program bounding methods}
\author{%
Stuart M. Harwood\thanks{stuart.m.harwood@exxonmobil.com}%
\and Dimitri J. Papageorgiou%
\and Francisco Trespalacios\\
{\small Corporate Strategic Research}\\
{\small ExxonMobil Research and Engineering Company}\\
{\small Annandale, NJ 08801 USA}%
}
\date{\today}
\begin{document}
\maketitle

\begin{abstract}
Semi-infinite programs are a class of mathematical optimization problems with a finite number of decision variables and infinite constraints.
As shown by Blankenship and Falk~\cite{blankenshipEA76}, a sequence of lower bounds which converges to the optimal objective value may be obtained with specially constructed finite approximations of the constraint set.
In \cite{mitsos11}, it is claimed that a modification of this lower bounding method involving approximate solution of the lower-level program yields convergent lower bounds.
We show with a counterexample that this claim is false, and discuss what kind of approximate solution of the lower-level program is sufficient for correct behavior.
\end{abstract}

\section{Introduction}
This note discusses methods for the global solution of semi-infinite programs (SIP).
Specifically, the method from \cite{mitsos11} is considered, and it is shown with a counterexample that the lower bounds do not always converge.
Throughout we use notation as close as possible to that used in \cite{mitsos11}, embellishing it only as necessary with, for instance, iteration counters. 

Consider a SIP in the general form
\begin{alignat}{2} 
\tag{SIP}
\label{eq:sip}
f^* = 
\inf_{x}\; & f(x) \\
\st 				
\notag & x \in X, \\
\notag & g(x,y) \le 0,\; \forall y \in Y,
\end{alignat}
for subsets $X$, $Y$ of finite dimensional real vector spaces and 
$f : X \to \mbb{R}$,
$g : X \times Y \to \mbb{R}$.
We may view $Y$ as an index set, with potentially uncountably infinite cardinality.
Important to validating the feasibility of a point $x$ is the lower-level program:
\begin{equation}
\label{eq:llp}
\tag{LLP}
\sup_y \set{ g(x,y) : y \in Y}.
\end{equation}

Global solution of \eqref{eq:sip} often involves the construction of convergent upper and lower bounds.
The approach in \cite{mitsos11} to obtain a lower bound is a modification of the constraint-generation/discretization method of \cite{blankenshipEA76}.
The claim is that the lower-level program may be solved approximately;
the exact nature of the approximation is important to the convergence of the lower bounds and this is the subject of the present note.

%

\section{Sketch of the lower bounding procedure and claim}

The setting of the method is the following.
The method is iterative and at iteration $k$, for a given finite subset $Y^{LBD,k} \subset Y$, a lower bound of $f^*$ is obtained from the finite program
\begin{alignat}{2} 
\label{eq:sip_lower}
f^{LBD,k} = 
\inf_{x}\; & f(x) \\
\st 				
\notag & x \in X, \\
\notag & g(x,y) \le 0, \;\forall y \in Y^{LBD,k}.
\end{alignat}
This is indeed a lower bound since fewer constraints are enforced, and thus \eqref{eq:sip_lower} is a relaxation of \eqref{eq:sip}.
Assume that the lower bounding problem \eqref{eq:sip_lower} is feasible
(otherwise we can conclude that \eqref{eq:sip} is infeasible).
Let $\bar{x}^k$ be a (global) minimizer of the lower bounding problem \eqref{eq:sip_lower}.
In \cite{mitsos11}, Lemma~2.2 states that we either verify 
$\sup_y \set{g(\bar{x}^k,y) : y \in Y} \le 0$,
\textbf{or else} find $\bar{y}^k \in Y$ such that $g(\bar{x}^k,\bar{y}^k) > 0$.
If $\sup_y \set{g(\bar{x}^k,y) : y \in Y} \le 0$,
then $\bar{x}^k$ is feasible in \eqref{eq:sip} and thus optimal (since it also solves a relaxation).
Otherwise, set $Y^{LBD,k+1}  = Y^{LBD,k} \cup \set{\bar{y}^k}$ and we iterate.

The precise statement of the claim is repeated here (again, with only minor embellishments to the notation to help keep track of iterations).

\begin{lemma}[Lemma~2.2 in \cite{mitsos11}]
\label{lem:claim}
Take any $Y^{LBD,0} \subset Y$.
Assume that $X$ and $Y$ are compact and that $g$ is continuous on $X \times Y$.
Suppose that at each iteration of the lower bounding procedure the lower-level program is solved approximately for the solution of the lower bounding problem $\bar{x}^k$ either establishing 
$\max_{y \in Y} g(\bar{x}^k,y) \le 0$, or furnishing a point $\bar{y}^k$ such that $g(\bar{x}^k,\bar{y}^k) > 0$.
Then, the lower bounding procedure converges to the optimal objective value, i.e. $f^{LBD,k} \to f^*$.
\end{lemma}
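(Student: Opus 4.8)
The plan is to adapt the discretization-convergence argument of Blankenship and Falk~\cite{blankenshipEA76}. First I would record the easy structural facts: since each $Y^{LBD,k+1}$ is obtained from $Y^{LBD,k}$ by adding a point, the index sets are nested, so $\{f^{LBD,k}\}$ is nondecreasing; and since \eqref{eq:sip_lower} is a relaxation of \eqref{eq:sip} for every $k$, we have $f^{LBD,k}\le f^*$. Hence $f^{LBD,k}\uparrow\bar f$ for some $\bar f\le f^*$, and the task reduces to showing $\bar f\ge f^*$. (I would also use lower semicontinuity of $f$, which is implicit in this line of work and is needed to pass objective values to a limit.)

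Next I would argue by contradiction: suppose $\bar f<f^*$. Then the procedure never terminates with the ``$\max_{y\in Y} g(\bar x^k,y)\le 0$'' outcome, since that would make $\bar x^k$ feasible in \eqref{eq:sip}, hence optimal, forcing $f^{LBD,k}=f^*$; so at every iteration we obtain $\bar y^k\in Y$ with $g(\bar x^k,\bar y^k)>0$. By compactness of $X$ and $Y$, pass to a strictly increasing subsequence $(k_j)$ with $\bar x^{k_j}\to\bar x\in X$ and $\bar y^{k_j}\to\bar y\in Y$. Continuity of $g$ then yields two facts. On one hand, $g(\bar x^{k_j},\bar y^{k_j})>0$ passes in the limit to $g(\bar x,\bar y)\ge 0$. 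On the other hand, for $j'>j$ we have $\bar y^{k_j}\in Y^{LBD,k_j+1}\subseteq Y^{LBD,k_{j'}}$, so feasibility of $\bar x^{k_{j'}}$ in \eqref{eq:sip_lower} gives $g(\bar x^{k_{j'}},\bar y^{k_j})\le 0$; letting $j'\to\infty$ gives $g(\bar x,\bar y^{k_j})\le 0$, and then $j\to\infty$ gives $g(\bar x,\bar y)\le 0$. Hence $g(\bar x,\bar y)=0$.

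To close the argument one wants to strengthen this to $\max_{y\in Y} g(\bar x,y)\le 0$, i.e.\ feasibility of $\bar x$ in \eqref{eq:sip}; then, by lower semicontinuity, $f^*\le f(\bar x)\le\liminf_j f(\bar x^{k_j})=\bar f$, contradicting $\bar f<f^*$. In the \emph{exact} version of the method this final step is immediate: there $\bar y^k$ is a true maximizer of $g(\bar x^k,\cdot)$, so for any fixed $\hat y\in Y$ we have $g(\bar x^{k_j},\bar y^{k_j})\ge g(\bar x^{k_j},\hat y)$, and passing to the limit $0=g(\bar x,\bar y)\ge g(\bar x,\hat y)$.

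The hard part — and the place where the claim as stated deserves scrutiny — is precisely this final step under the \emph{approximate} hypothesis of Lemma~\ref{lem:claim}: that hypothesis only guarantees that $\bar y^k$ is \emph{some} point with $g(\bar x^k,\bar y^k)>0$, and says nothing about how $g(\bar x^k,\bar y^k)$ compares to $\max_{y\in Y} g(\bar x^k,y)$. Consequently the key inequality $g(\bar x^{k_j},\bar y^{k_j})\ge g(\bar x^{k_j},\hat y)$ is unavailable, and there is no apparent way to force $\max_y g(\bar x,y)\le 0$: the limiting ``violated'' point $\bar y$ may have zero violation at $\bar x$ while some other $\hat y$ retains strictly positive violation there, leaving $\bar x$ infeasible. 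I would therefore expect a correct proof to need a strengthened condition on the lower-level solve — roughly, that whenever $\max_y g(\bar x^k,y)$ exceeds a tolerance that is driven to zero, the returned $\bar y^k$ recovers that value up to a fixed fraction or a fixed additive gap — and I would structure the write-up so as to isolate the weakest such condition that makes the subsequence argument above go through.
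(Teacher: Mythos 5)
Your instinct here is exactly right, and it is worth being explicit about what that means: the statement of Lemma~\ref{lem:claim} is \emph{false}, and the paper's treatment of it is not a proof but a refutation. You reproduced the Blankenship--Falk subsequence argument faithfully, obtained $g(\bar x,\bar y)=0$ at the limit points, and then correctly observed that the hypothesis ``$g(\bar x^k,\bar y^k)>0$'' alone gives no control relating $g(\bar x^k,\bar y^k)$ to $\max_{y\in Y}g(\bar x^k,y)$, so the final step --- feasibility of the limit $\bar x$ in \eqref{eq:sip} --- cannot be forced. That is precisely the defect at issue, and your proposed remedy (require the returned $\bar y^k$ to recover the true maximum up to a fixed fraction) is exactly the condition $g(\bar x^k,\bar y^k)\ge\alpha g^*(\bar x^k)$ adopted in Lemma~\ref{lem:claim_mod}, whose proof runs along the same subsequence/uniform-continuity lines you sketched.

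The one thing your write-up stops short of is the step that actually settles the question: observing that a particular proof technique fails does not show the statement is false, so a counterexample is required. The paper supplies one, and it is worth internalizing because it is very simple: take $X=Y=[-1,1]$, $f(x)=-x$, $g(x,y)=2x-y$, so the feasible set is $[-1,-1/2]$ and $f^*=1/2$. Starting from $Y^{LBD,1}=\emptyset$ one has $\bar x^1=1$; returning $\bar y^k=\bar x^k$ at every iteration satisfies $g(\bar x^k,\bar y^k)=\bar x^k>0$, yet each added constraint $2x-\bar y^k\le 0$ only halves the incumbent, giving $\bar x^k=2^{1-k}$ and $f^{LBD,k}=-2^{1-k}\to 0<1/2=f^*$. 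In the language of your argument, this realizes concretely the scenario you flagged: the limiting ``violated'' point $\bar y=0$ has zero violation at $\bar x=0$, while $\hat y=-1$ retains violation $g(0,-1)=1>0$, so $\bar x$ is infeasible and the bounds stall strictly below $f^*$. So: do not attempt to repair the proof of Lemma~\ref{lem:claim}; record the counterexample, and prove the corrected statement instead.
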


\section{Correction}

\subsection{Counterexample}

We now present a counterexample to the claim in Lemma~\ref{lem:claim}.
Consider 
\begin{alignat}{2} 
\tag{CEx}
\label{eq:counter_ex}
\inf_{x}\; & -x \\
\st 				
\notag & x \in [-1,1], \\
\notag & 2x - y \le 0,\; \forall y \in [-1,1],
\end{alignat}
thus we define 
$X = Y = [-1,1]$, 
$f : x \mapsto -x$,
$g : (x,y) \mapsto 2x - y$.
The behavior to note is this:
We are trying to maximize $x$;
The feasible set is
\[
\set{x \in [-1,1] : x \le (\sfrac{1}{2})y, \forall y \in[-1,1] } = [-1,-\sfrac{1}{2}];
\]
The infimum, consequently, is $\sfrac{1}{2}$.
See Figure~\ref{fig:cex1}.

\begin{figure}
\begin{center}
\begin{tikzpicture}[xscale=1.5,yscale=1.5]
\draw[fill=gray] (-0.5,-1) -- (0.5,1) -- (1,1) -- (1,-1) -- cycle;
\draw[latex-latex] (0,-1.3) -- (0,1.3) node[above]{$y$}; 
\draw[latex-latex] (-1.3,0) -- (1.3,0) node[right]{$x$}; 
\draw (-1,-1) rectangle (1,1); 
\draw[dashed,domain=0:1] plot (\x, \x);
\end{tikzpicture}
\end{center}
\caption{Visualization of counterexample~\eqref{eq:counter_ex}.
The box represents $[-1,1] \times [-1,1]$.
The shaded grey area is the subset of $(x,y)$ such that $2x - y > 0$.
The dashed line represents the approximate minimizers used in the counterexample.}
\label{fig:cex1}
\end{figure}
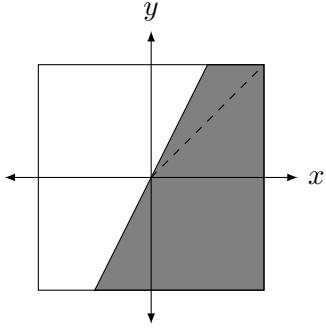

Beginning with $Y^{LBD,1} = \emptyset$,  the minimizer of the lower bounding problem is $\bar{x}^1 = 1$.
Now, assume that solving the resulting \eqref{eq:llp} approximately, we get $\bar{y}^1 = 1$ which we note satisfies
\[
2\bar{x}^1 - \bar{y}^1 = 1 > 0
\]
as required by Lemma~\ref{lem:claim}.

The next iteration, with $Y^{LBD,2} = \set{1}$, adds the constraint 
$2x - 1 \le 0$
to the lower bounding problem;
the feasible set is $[-1,\sfrac{1}{2}]$ so the minimizer is $\bar{x}^2 = \sfrac{1}{2}$.
Again, assume that solving the lower-level program approximately yields $\bar{y}^2 = \sfrac{1}{2}$;
again we get
\[
2\bar{x}^2 - \bar{y}^2 = \sfrac{1}{2} > 0
\]
as required by Lemma~\ref{lem:claim}.

The third iteration, with $Y^{LBD,3} = \set{1, \sfrac{1}{2}}$, adds the constraint 
$2x - \sfrac{1}{2} \le 0$
to the lower bounding problem;
the feasible set is $[-1,\sfrac{1}{4}]$ so the minimizer is $\bar{x}^3 = \sfrac{1}{4}$.
Again, assume that solving the lower-level program approximately yields $\bar{y}^3 = \sfrac{1}{4}$;
again we get
\[
2\bar{x}^3 - \bar{y}^3 = \sfrac{1}{4} > 0
\]
as required by Lemma~\ref{lem:claim}.

Proceeding in this way, we construct $\bar{x}^k$ and $\bar{y}^k$ so that 
$g(\bar{x}^k,\bar{y}^k) > 0$ and 
the lower bounds satisfy 
$f^{LBD,k} = -\bar{x}^k = -\frac{1}{2^{k-1}}$, for all $k$.
Consequently, they converge to $0$, which we note is strictly less than the infimum of $\sfrac{1}{2}$.

\subsection{Modified claim}

We now present a modification of the claim in order to demonstrate what kind of approximate solution of the lower-level program suffices to establish convergence of the lower bounds.
To state the result, let the optimal objective value of \eqref{eq:llp} as a function of $x$ be
\[
g^*(x) = \sup_y\set{g(x,y) : y \in Y}.
\]
The proof of the following result has a similar structure to the original proof of \cite[Lemma~2.2]{mitsos11}.

\begin{lemma}
\label{lem:claim_mod}
Choose any finite $Y^{LBD,0} \subset Y$, and $\alpha \in (0,1)$.
Assume that $X$ and $Y$ are compact and that $f$ and $g$ are continuous.
Suppose that at each iteration $k$ of the lower bounding procedure \eqref{eq:llp} is solved approximately for the solution $\bar{x}^k$ of the lower bounding problem~\eqref{eq:sip_lower}, either establishing that
$g^*(\bar{x}^k) \le 0$ 
or
furnishing a point $\bar{y}^k$ such that 
\[
g(\bar{x}^k,\bar{y}^k) \ge \alpha g^*(\bar{x}^k) > 0.
\]
Then, the lower bounding procedure converges to the optimal objective value, i.e. $f^{LBD,k} \to f^*$. 
\end{lemma}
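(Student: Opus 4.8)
\emph{Proof plan.}
The plan is to run the Blankenship--Falk / \cite{mitsos11} argument, but to use the relative-accuracy factor $\alpha$ at the one place where it matters. First I would dispose of the easy case: if the procedure terminates at a finite iteration $k$ with $g^*(\bar{x}^k)\le 0$, then $\bar{x}^k$ is feasible for \eqref{eq:sip} and, being a global minimizer of the relaxation \eqref{eq:sip_lower}, satisfies $f^{LBD,k}=f(\bar{x}^k)=f^*$, so there is nothing to prove. Hence assume the procedure generates infinite sequences $\set{\bar{x}^k}$ and $\set{\bar{y}^k}$ with $g(\bar{x}^k,\bar{y}^k)\ge\alpha g^*(\bar{x}^k)>0$ for every $k$. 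Two structural facts are then recorded. Since $Y^{LBD,k}\subseteq Y^{LBD,k+1}$, the feasible sets of \eqref{eq:sip_lower} are nested and decreasing, so $\set{f^{LBD,k}}$ is nondecreasing; and since each \eqref{eq:sip_lower} is a relaxation of \eqref{eq:sip}, $f^{LBD,k}\le f^*$ for all $k$. Thus $f^{LBD,k}$ increases to some $\hat{f}\le f^*$, and it suffices to prove $\hat{f}\ge f^*$. The second fact is the accumulated-constraint property: if $k<l$ then $\bar{y}^k\in Y^{LBD,l}$, so feasibility of $\bar{x}^l$ in the iteration-$l$ problem gives $g(\bar{x}^l,\bar{y}^k)\le 0$.

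The core of the proof is to exhibit a point feasible for \eqref{eq:sip} whose objective value is $\hat{f}$. By compactness of $X$ and $Y$, pass to a subsequence $k_1<k_2<\cdots$ along which $\bar{x}^{k_j}\to x^*\in X$ and $\bar{y}^{k_j}\to y^*\in Y$. Applying the accumulated-constraint property to the index pairs $(k_j,k_{j+1})$ and using continuity of $g$ on $X\times Y$ yields $g(x^*,y^*)\le 0$. On the other hand, for every fixed $y\in Y$ and every $j$ we have $g(\bar{x}^{k_j},\bar{y}^{k_j})\ge\alpha g^*(\bar{x}^{k_j})\ge\alpha\, g(\bar{x}^{k_j},y)$; letting $j\to\infty$ and using continuity of $g$ gives $g(x^*,y^*)\ge\alpha\, g(x^*,y)$, and taking the supremum over $y\in Y$ gives $g(x^*,y^*)\ge\alpha g^*(x^*)$. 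Since $\alpha>0$, the two inequalities force $g^*(x^*)\le 0$, i.e. $x^*$ is feasible for \eqref{eq:sip}, hence $f(x^*)\ge f^*$. Finally, continuity of $f$ gives $f^{LBD,k_j}=f(\bar{x}^{k_j})\to f(x^*)$, while $f^{LBD,k_j}\to\hat{f}$; therefore $\hat{f}=f(x^*)\ge f^*$, and combined with $\hat{f}\le f^*$ this shows $f^{LBD,k}\to f^*$.

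The main obstacle, and the precise spot where the present hypothesis does work that the weaker hypothesis of Lemma~\ref{lem:claim} cannot, is the deduction of $g^*(x^*)\le 0$ rather than merely $g(x^*,y^*)\le 0$. Without a \emph{uniform} factor $\alpha>0$ one can only pass to the limit in $g(\bar{x}^{k_j},\bar{y}^{k_j})>0$ and conclude $g(x^*,y^*)\ge 0$, which together with $g(x^*,y^*)\le 0$ gives $g(x^*,y^*)=0$ but says nothing about $g^*(x^*)$; indeed in the counterexample $x^*=y^*=0$ with $g(x^*,y^*)=0$ yet $g^*(x^*)=1>0$, so the limit point is infeasible and the bounds need not converge. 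Two minor technical points to treat carefully in the writeup are that $g^*$ is bounded (it is finite by compactness of $Y$ and continuity of $g$, which legitimizes the limiting inequalities) and that all limits above are taken along the single common subsequence, the limit of the full sequence $\set{f^{LBD,k}}$ being pinned down only because that sequence is monotone.
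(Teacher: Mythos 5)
Your proof is correct, but it takes a genuinely different route from the paper's, and the difference is worth noting. The paper works only with a convergent subsequence of $\{\bar{x}^k\}$: it invokes \emph{uniform} continuity of $g$ on the compact set $X\times Y$ together with the accumulated constraints $g(\bar{x}^{\ell},\bar{y}^k)\le 0$ (for $\ell>k$) and the Cauchy property of the convergent subsequence to show $g(\bar{x}^k,\bar{y}^k)\to 0$, then uses the $\alpha$-inequality to deduce $g^*(\bar{x}^k)\to 0$, and finally appeals to continuity of the value function $g^*$ (a parametric optimization result cited from Aubin--Frankowska) to conclude $g^*(x^*)=0$. You instead extract a convergent subsequence of the \emph{pairs} $(\bar{x}^{k_j},\bar{y}^{k_j})\to(x^*,y^*)$, obtain $g(x^*,y^*)\le 0$ from the consecutive-index constraints by plain joint continuity, and establish $g(x^*,y^*)\ge\alpha g^*(x^*)$ by passing to the limit in $g(\bar{x}^{k_j},\bar{y}^{k_j})\ge\alpha\,g(\bar{x}^{k_j},y)$ for each fixed $y$ and then taking the supremum over $y$. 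This makes the argument more elementary: you need neither uniform continuity nor the continuity of $g^*$, only pointwise (joint) continuity of $g$ and compactness of $Y$; the price is carrying the extra subsequence in $Y$. The paper's route yields the slightly stronger quantitative fact $g^*(\bar{x}^k)\to 0$ along the subsequence. Two small housekeeping items you should add to match the paper's level of completeness: dispose of the case where some lower bounding problem \eqref{eq:sip_lower} is infeasible (then \eqref{eq:sip} is infeasible and one sets $f^{LBD,k}=+\infty=f^*$), and note that each feasible lower bounding problem attains its minimum by compactness of $X$ and continuity of $f$ and $g$, so that $\bar{x}^k$ indeed exists. Your concluding use of monotonicity of the lower bounds to upgrade subsequential convergence to convergence of the full sequence is the same as the paper's.
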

\begin{proof}
First, if the lower bounding problem~\eqref{eq:sip_lower} is ever infeasible for some iteration $k$, then \eqref{eq:sip} is infeasible and we can set $f^{LBD,k} = +\infty = f^*$.
Otherwise, since $X$ is compact, $Y^{LBD,k}$ is finite, and $f$ and $g$ are continuous, for every iteration the lower bounding problem has a solution by Weierstrass' (extreme value) theorem.
If at some iteration $k$ the lower bounding problem furnishes a point $\bar{x}^k$ for which $g^*(\bar{x}^k) \le 0$, then $\bar{x}^k$ is feasible for \eqref{eq:sip}, and thus optimal.
The corresponding lower bound $f^{LBD,k}$, and all subsequent lower bounds, equal $f^*$.

Otherwise, we have an infinite sequence of solutions to the lower bounding problems.
Since $X$ is compact we can move to a subsequence $\seq[k \in \mbb{N}]{\bar{x}^{k}} \subset X$ which converges to $x^* \in X$.
By construction of the lower bounding problem we have
\[
g(\bar{x}^{\ell},\bar{y}^k) \le 0, \quad \forall \ell,k : \ell > k.
\]
By continuity and compactness of $X \times Y$ we have uniform continuity of $g$, and so for any $\epsilon > 0$, there exists a $\delta > 0$ such that
\begin{equation}
\label{eq:gee}
g(x,\bar{y}^k) < \epsilon,
	\quad\forall x : \norm{x - \bar{x}^{\ell}} < \delta, 
		\quad\forall \ell,k : \ell > k.
\end{equation}
Since the (sub)sequence $\seq[k \in \mbb{N}]{\bar{x}^k}$ converges, there is an index $K$ sufficiently large that 
\begin{equation}
\label{eq:tails}
\norm{\bar{x}^{\ell} - \bar{x}^k} < \delta, \quad \forall \ell,k : \ell > k \ge K.
\end{equation}
Using \eqref{eq:tails}, we can substitute $x = \bar{x}^k$ in \eqref{eq:gee} to get that for any $\epsilon > 0$, there exists $K$ such that
\[
g(\bar{x}^k,\bar{y}^k) < \epsilon, \quad \forall k \ge K.
\]
By assumption $g(\bar{x}^k,\bar{y}^k) > 0$ for all $k$, and so combined with the above we have that
$g(\bar{x}^k,\bar{y}^k) \to 0$. 

Combining $g(\bar{x}^k,\bar{y}^k) \to 0$ with
$
g(\bar{x}^k,\bar{y}^k) \ge \alpha g^*(\bar{x}^k) > 0,
$
for all $k$,
we see 
$
g^*(\bar{x}^k) \to 0.
$
Meanwhile $g^* : X \to \mbb{R}$ is a continuous function, by classic parametric optimization results like \cite[Theorem~1.4.16]{aubin_frankowska} (using continuity of $g$ and compactness of $Y$).
Thus 
\[
g^*(x^*)  = \lim_{k \to \infty} g^*(\bar{x}^k) = 0.
\]
Thus $x^*$ is feasible in \eqref{eq:sip} and so $f^* \le f(x^*)$.
But since the lower bounding problem is a relaxation, $f^{LBD,k} = f(\bar{x}^k) \le f^*$ for all $k$, and so by continuity of $f$, $f(x^*) \le f^*$.
Combining these inequalities we see $f^{LBD,k} \to f(x^*) = f^*$.
Since the entire sequence of lower bounds is an increasing sequence, we see that the entire sequence converges to $f^*$ (without moving to a subsequence).
\end{proof}
\section{Remarks}

The main contribution of \cite{mitsos11} is a novel \emph{upper} bounding procedure, which still stands, and combined with the modified lower bounding procedure from Lemma~\ref{lem:claim_mod} or the original procedure from \cite{blankenshipEA76}, the overall global solution method for \eqref{eq:sip} is still effective.

The counterexample that has been presented may seem contrived.
However, as the lower bounding method for SIP from \cite{mitsos11} is adapted to give a lower bounding method for \emph{generalized} semi-infinite programs (GSIP) in \cite{mitsosEA15}, a modification of the counterexample reveals that similar behavior may occur (and in a more natural way) when constructing the lower bounds for a GSIP.
Consequently, the lower bounds fail to converge to the infimum.
See \cite{Harwood19_GSIP}.


\end{document}